\newcommand{\E}{\mathbb{E}}
\newcommand{\F}{\mathcal{F}}
\newcommand{\PP}{\mathbb{P}}
\newcommand{\N}{\mathbb{N}}
\newcommand{\one}{\mathbbm{1}}
\newcommand{\dd}{\mathrm{d}}
\newcommand{\define}{\vcentcolon =}
\newtheorem{theorem}{Theorem}[section]
\newtheorem{corollary}[theorem]{Corollary}
\newtheorem{proposition}[theorem]{Proposition}
\theoremstyle{definition}
\newtheorem{remark}[theorem]{Remark}
\title{Sharpness of Lenglart's domination inequality and a sharp monotone version}
\author{%
Sarah~Geiss\footnote{Technische Universit\"at Berlin, Germany. E-mail: \href{mailto:geiss@math.tu-berlin.de}{geiss@math.tu-berlin.de}}
\thanks{This author was supported by the 
Elsa-Neumann-Stipendium des Landes Berlin.}
\and 
  Michael~Scheutzow \footnote{Technische Universit\"at Berlin, Germany. E-mail: 
  \href{mailto:ms@math.tu-berlin.de}{ms@math.tu-berlin.de}}}
\date{}   
\begin{document}

\parindent0em

\maketitle

\begin{abstract} We prove that the best so far known constant $c_p=\frac{p^{-p}}{1-p},\, p\in(0,1)$ of a domination inequality, which originates to Lenglart, is sharp. In particular, we solve an open question posed by Revuz and Yor \cite{RevuzYor}.  Motivated by the application to  maximal inequalities, like e.g. the Burkholder-Davis-Gundy inequality, we also study the domination inequality under an additional monotonicity assumption. In this special case, a constant which stays bounded for $p$ near $1$ was proven by Pratelli and Lenglart. We provide the sharp constant for this case.
\end{abstract}

\textbf{Keywords:} Lenglart's domination inequality, Garsia's Lemma, sharpness, monotone Lenglart's inequality, BDG inequality \\ [0.25em]
\textbf{MSC2020 subject classifications:} 60G44, 60G40, 60G42, 60J65

\bigskip

\section{Introduction}

In this note, we prove that the best so far known constant  $c_p$ of a domination inequality, which originates to Lenglart \cite[Corollaire II]{Lenglart} (see \cref{th:lenglart}), is sharp. In particular, we  solve an open question posed by Revuz and Yor \cite[Question IV.1, p.178]{RevuzYor}.  Furthermore, motivated by the method of applying Lenglart's inequality to extend maximal inequalities to small exponents, we study Lenglart's domination inequality under an additional monotonicity assumption: A result by Pratelli \cite{Pratelli} and Lenglart \cite{Lenglart} implies (under the additional monotonicity assumption) a constant, which is bounded by $2$, and hence considerably improves the constant of Lenglart's inequality for $p$ near 1. We provide a sharp constant. The sharpness of our monotone version of Lenglart's inequality is related to a result by Wang \cite{Wang}.  \\

\hspace{1em} Let $(\Omega, \F, \PP, (\F_t)_{t\geq 0 })$ be a filtered probability space satisfying the usual conditions. The following lemma is \cite[Lemma 2.2 (ii)]{MehriScheutzow}:

\begin{theorem}[Lenglart's inequality] \label{th:lenglart}
Let $X$  and $G$ be non-negative adapted right-continuous processes, and let $G$ be in addition non-decreasing and predictable such that 
$\E[X_\tau \mid\F_0] \leq \E[G_\tau\mid\F_0] \leq \infty$ for any bounded stopping time $\tau$. Then for all $p\in(0,1)$,
\begin{equation*}
\E\bigg[\bigg(\sup_{t\geq 0} X_t\bigg)^p \,\bigg| \, \F_0 \bigg]
\leq c_p \E\bigg[\bigg(\sup_{t\geq 0} G_t\bigg)^p \, \bigg|\, \F_0 \bigg]
\end{equation*}
where $c_p \define \frac{p^{-p}}{1-p}$.
\end{theorem}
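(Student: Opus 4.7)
The plan is to follow the classical two-step Lenglart strategy: first derive a distributional (weak-type) estimate from the stopping-time hypothesis, then integrate it against the layer-cake representation of $\E[(X^*)^p\mid\F_0]$, choosing a free threshold parameter optimally to recover the constant $c_p = p^{-p}/(1-p)$.

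\textbf{Step 1 (distributional estimate).} Write $X^* := \sup_{t\ge 0} X_t$ and $G^* := \sup_{t\ge 0} G_t = G_\infty$. For $\lambda,\mu>0$ I aim to prove
\[
\PP(X^* > \lambda \mid \F_0)\;\le\;\tfrac{1}{\lambda}\E[G^*\wedge\mu\mid\F_0] \;+\; \PP(G^* \ge \mu\mid\F_0).
\]
Set $\tau_\lambda := \inf\{t:X_t>\lambda\}$ and $\sigma_\mu := \inf\{t:G_t\ge\mu\}$. Since $G$ is non-decreasing, right-continuous and predictable, $\sigma_\mu$ is a predictable stopping time with an announcing sequence $\sigma^k \uparrow \sigma_\mu$ satisfying $\sigma^k < \sigma_\mu$ (where $\sigma_\mu>0$), so that $G_{\sigma^k} < \mu$. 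Applying the hypothesis at the bounded stopping time $\tau_\lambda\wedge\sigma^k\wedge n$, together with the right-continuity of $X$ (so $X_{\tau_\lambda} \ge \lambda$ on $\{\tau_\lambda < \infty\}$) and the monotonicity of $G$, yields
\[
\lambda\,\PP(\tau_\lambda\le\sigma^k\wedge n\mid\F_0)\;\le\;\E[X_{\tau_\lambda\wedge\sigma^k\wedge n}\mid\F_0]\;\le\;\E[G_{\tau_\lambda\wedge\sigma^k\wedge n}\mid\F_0]\;\le\;\E[G^*\wedge\mu\mid\F_0].
\]
Letting $k,n\to\infty$ and splitting $\{X^*>\lambda\}=\{\tau_\lambda<\infty\}$ into the events $\{\tau_\lambda<\sigma_\mu\}$ (controlled by the bound above) and $\{\sigma_\mu\le\tau_\lambda\}\subset\{G^*\ge\mu\}$ gives the claim.

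\textbf{Step 2 (layer-cake integration and optimization).} By Fubini,
\[
\E[(X^*)^p\mid\F_0] \;=\; p\!\int_0^\infty \lambda^{p-1}\PP(X^*>\lambda\mid\F_0)\,\dd\lambda,
\]
and inserting the Step~1 bound with the free threshold $\mu = c\lambda$ for a constant $c>0$ yields
\[
\E[(X^*)^p\mid\F_0] \;\le\; p\!\int_0^\infty\lambda^{p-2}\E[G^*\wedge c\lambda\mid\F_0]\,\dd\lambda \;+\; p\!\int_0^\infty \lambda^{p-1}\PP(G^* \ge c\lambda\mid\F_0)\,\dd\lambda.
\]
The second integral equals $c^{-p}\E[(G^*)^p\mid\F_0]$ by the substitution $u=c\lambda$; the first equals $\tfrac{c^{1-p}}{1-p}\E[(G^*)^p\mid\F_0]$ by splitting $\int_0^\infty \lambda^{p-2}(G^*\wedge c\lambda)\,\dd\lambda$ at $\lambda = G^*/c$ and using $\int_a^\infty \lambda^{p-2}\dd\lambda = a^{p-1}/(1-p)$. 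Combining,
\[
\E[(X^*)^p\mid\F_0] \;\le\; \left(\tfrac{c^{1-p}}{1-p} + c^{-p}\right)\E[(G^*)^p\mid\F_0].
\]
Minimizing $f(c):=\tfrac{c^{1-p}}{1-p}+c^{-p}$ via $f'(c)=c^{-p-1}(c-p)=0$ gives $c=p$ and $f(p)=p^{-p}/(1-p)=c_p$, delivering the theorem.

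The main obstacle lies in Step~1: the sharp constant depends crucially on the freedom to choose $\mu$ independently of $\lambda$, and the estimate $G_{\tau_\lambda\wedge\sigma^k\wedge n}\le\mu$ is exactly where the predictability of $G$ is used---without it, $G$ could jump strictly across $\mu$ at $\sigma_\mu$, and the naïve choice $\mu=\lambda$ only yields the worse constant $(2-p)/(1-p)$. The optimization in Step~2 is then elementary calculus.
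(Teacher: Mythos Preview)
Your argument is correct and follows the standard route to the constant $c_p=p^{-p}/(1-p)$. Note, however, that the paper does \emph{not} supply its own proof of this theorem: it is quoted as \cite[Lemma~2.2~(ii)]{MehriScheutzow} and only used as input. The paper does confirm your Step~1 inequality as the ``key part of the proof of Lenglart's inequality'' in Remark~2.6, so your approach is exactly the one the authors have in mind; your Step~2 (layer-cake with the linear coupling $\mu=c\lambda$ and optimization at $c=p$) is the standard continuation found in the cited references (Revuz--Yor, Ren--Shen, Mehri--Scheutzow). In short: nothing to compare against in the paper itself, but your proof is the expected one and is sound.
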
 

\hspace{1em} In the original work by Lenglart \cite[Corollaire II]{Lenglart}, the inequality is proven for $c_p = \frac{2-p}{1-p}$, $p\in(0,1)$. The constant $c_p$ is improved to $\frac{p^{-p}}{1-p}$  by Revuz and Yor in \cite[Exercise IV.4.30]{RevuzYor} for continuous processes $X$ and $G$.  This result is generalized to c\`adl\`ag processes by Ren and Shen in \cite[Theorem 1]{RenShen} and is extended to a more general setting than \cite[Corollaire II]{Lenglart} by Mehri and Scheutzow \cite[Lemma 2.2 (ii)]{MehriScheutzow}. Furthermore, the growth rate of the optimal constant $c_p^{(opt)}$ for c\`adl\`ag processes has been studied (see \cite[Theorem 2]{RenShen}): It holds that $(c_p^{(opt)})^{1/p}= O(1/p)$ for $p\to 0^+$. We prove (see \cref{th:Haupt}) that $\frac{p^{-p}}{1-p}$ is sharp. \\

\hspace{1em} Lenglart's inequality yields a very short proof of the Burkholder-Davis-Gundy inequality for continuous local martingales for small exponents (see e.g. \cite[Theorem IV.4.1]{RevuzYor}): Let $(M_t)_{t\geq 0}$ be a continuous local martingale with $M_0=0$. To prove $\E[\langle M, M \rangle_t^{q/2} ] \lesssim \E[\sup_{t\geq 0} |M_t|^q]$ for $q\in(0,2)$, take 
\begin{equation*}
X_t\define\langle M, M \rangle_t, \qquad G_t\define \sup_{0\leq s \leq t} |M_s|^2.
\end{equation*}

Using that $M_t^2 - \langle M,M \rangle_t$ is a continuous local martingale, we have $\E[X_\tau] \leq \E[G_\tau]$ for any bounded stopping time $\tau$. Applying Lenglart's inequality with $p=q/2$, we obtain 
\begin{equation*}
\E[\langle M, M \rangle_t^{q/2} ] \leq c_{q/2} \E[\sup_{t\geq 0} |M_t|^q].
\end{equation*}
For $q=1$, this implies $c_{BDG,1} = c_{q/2} = 2\sqrt{2} \approx 2,8284$. The optimal BDG constant can be computed numerically for this case (see Schachermayer and Stebegg \cite{Schachermayer}) and is $c^{(opt)}_{BDG,1} \approx 1,2727$.  A better constant than $c_{q/2}$ can be achieved if we apply the following proposition due to Lenglart \cite[Proposition I]{Lenglart} and Pratelli \cite[Proposition 1.2]{Pratelli} instead:

\begin{proposition}[Lenglart, Pratelli] \label{prop:Lenglart}
Let $F$ be a concave non-decreasing function with $F(0) =0$ and let $c>0$ be a constant. Let $Y$ and $G$ be adapted non-negative right-continuous processes starting in $0$. Furthermore, let $G$ be non-decreasing and predictable. Assume that $\E[Y_\tau] \leq c \E[G_\tau]$ holds for all finite stopping times $\tau$. Then, for all finite stopping times $\tau$, we have
\begin{equation*}
\E[F(Y_\tau)] \leq (1+c)\E[F(G_\tau)].
\end{equation*}
\end{proposition}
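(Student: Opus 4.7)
The plan is to reduce the general concave $F$ to the family of truncations $x \mapsto x \wedge \lambda$ and then to exploit the predictability of $G$ via a stopping-time argument. Since $F$ is concave, non-decreasing, and $F(0)=0$, its right derivative $F'$ is non-negative and non-increasing, which gives the integral representation
\begin{equation*}
F(x) \;=\; a\, x \;+\; \int_{(0,\infty)} (x \wedge \lambda)\, \nu(\dd\lambda),
\end{equation*}
with $a := \lim_{x \to \infty} F'(x) \geq 0$ and some non-negative Borel measure $\nu$ on $(0,\infty)$ satisfying $\int (1 \wedge \lambda)\, \nu(\dd\lambda) < \infty$. Since $\E[Y_\tau] \leq c\,\E[G_\tau]$ by assumption, the linear term is already controlled, and Tonelli's theorem then reduces the proposition to proving the single-level estimate
\begin{equation*}
\E[Y_\tau \wedge \lambda] \;\leq\; (1+c)\, \E[G_\tau \wedge \lambda] \qquad \text{for every fixed } \lambda > 0.
\end{equation*}

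For the single-level estimate, I would fix $\lambda > 0$ and consider $\sigma_\lambda := \inf\{t \geq 0 : G_t > \lambda\}$. Because $G$ is predictable, non-decreasing, and $G_0 = 0 < \lambda$, $\sigma_\lambda$ is a predictable stopping time; choose an announcing sequence $(\sigma_n)$ with $\sigma_n < \sigma_\lambda$ on $\{\sigma_\lambda > 0\}$ and $\sigma_n \nearrow \sigma_\lambda$. Monotonicity of $G$ together with the definition of $\sigma_\lambda$ then forces $G_t \leq \lambda$ for every $t < \sigma_\lambda$, so that $G_{\sigma_n} \leq \lambda$ almost surely; and right-continuity of $G$ at $\sigma_\lambda$ gives $G_{\sigma_\lambda} \geq \lambda$ on $\{\sigma_\lambda < \infty\}$.

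Now I would use the pointwise bound
\begin{equation*}
Y_\tau \wedge \lambda \;\leq\; Y_{\tau \wedge \sigma_n}\, \mathbf{1}_{\{\tau < \sigma_n\}} \;+\; \lambda\, \mathbf{1}_{\{\tau \geq \sigma_n\}} \;\leq\; Y_{\tau \wedge \sigma_n} \;+\; \lambda\, \mathbf{1}_{\{\tau \geq \sigma_n\}},
\end{equation*}
which just splits on the event $\{\tau < \sigma_n\}$. Applying the hypothesis to the finite stopping time $\tau \wedge \sigma_n$, and using that $G_{\tau \wedge \sigma_n} \leq \lambda$ and $G_{\tau \wedge \sigma_n} \leq G_\tau$, the expectation of the first summand is at most $c\, \E[G_\tau \wedge \lambda]$. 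Taking expectations and letting $n \to \infty$, monotone convergence gives $\PP(\tau \geq \sigma_n) \downarrow \PP(\tau \geq \sigma_\lambda)$, and since $G_\tau \geq G_{\sigma_\lambda} \geq \lambda$ on $\{\tau \geq \sigma_\lambda\}$,
\begin{equation*}
\lambda\, \PP(\tau \geq \sigma_\lambda) \;\leq\; \E\bigl[(G_\tau \wedge \lambda)\, \mathbf{1}_{\{\tau \geq \sigma_\lambda\}}\bigr] \;\leq\; \E[G_\tau \wedge \lambda],
\end{equation*}
which combined with the previous bound yields the required single-level estimate.

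The main obstacle I anticipate is the predictability argument ensuring the existence of an announcing sequence $(\sigma_n)$ along which $G_{\sigma_n}$ stays at or below $\lambda$: this is exactly where the predictability hypothesis on $G$ enters, since it controls the possible jump of $G$ across level $\lambda$ at the hitting time $\sigma_\lambda$ (a right-continuous, non-predictable $G$ could overshoot). The remaining ingredients — the integral representation of $F$, the trivial pointwise decomposition, and the Tonelli reassembly — are routine, and the passage to the limit does not require any left-continuity of $Y$ because the limit only acts on the deterministic factor $\lambda\, \PP(\tau \geq \sigma_n)$.
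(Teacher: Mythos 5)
The paper does not actually prove \cref{prop:Lenglart}; it only cites Lenglart and Pratelli, so there is no in-paper proof to compare against. Your argument is essentially the classical Lenglart--Pratelli proof: represent the concave $F$ as a mixture of truncations $x\mapsto x\wedge\lambda$, prove the single-level estimate via a predictable hitting time of $G$ at level $\lambda$, and reassemble by Tonelli. The structure is sound, and it is also the template that the paper's own proof of \cref{th:Monotonelenglart} refines (there the extra free parameter $\lambda$ in $\inf\{s : G_s\ge \lambda t\}$, optimized at $\lambda=p$, is what improves $(1+c)$ to $p^{-p}$ for power functions; your $\lambda=1$-type splitting can only give $1+c$).

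One concrete repair is needed: your claim that $\sigma_\lambda:=\inf\{t\ge 0: G_t>\lambda\}$ is a predictable time does not follow from predictability of $G$. The classical fact is that $\inf\{t: G_t\ge\lambda\}$ is predictable, because $\{(t,\omega): G_t(\omega)\ge\lambda\}$ is a predictable set which, by right-continuity and monotonicity, equals the stochastic interval $[\![\sigma,\infty[\![$ of its d\'ebut; the set $\{G>\lambda\}$ need not contain its d\'ebut, and the d\'ebut of a predictable set is not predictable in general. For instance, with $U$ the (totally inaccessible) first jump time of a Poisson process and $G_t:=(t\wedge\lambda)+(t-U)^+$, the process $G$ is continuous (hence predictable) and non-decreasing with $G_0=0$, yet $\inf\{t:G_t>\lambda\}=U$ on $\{U\ge\lambda\}$, which is not predictable. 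Replacing $>$ by $\ge$ in the definition of $\sigma_\lambda$ fixes this at no cost: one still has $G_t<\lambda$ for $t<\sigma_\lambda$ (hence $G_{\sigma_n}\le\lambda$ along the announcing sequence) and $G_{\sigma_\lambda}\ge\lambda$ by right-continuity, which are the only two properties your argument uses. A second, much smaller caveat: the integral representation $F(x)=ax+\int(x\wedge\lambda)\,\nu(\dd\lambda)$ requires $F$ to be (right-)continuous at $0$; a concave non-decreasing $F$ with $F(0)=0$ may jump at $0$, but that part is handled by applying your single-level estimate and letting $\lambda\downarrow 0$ after dividing by $\lambda$. With these two adjustments the proof is complete.
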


\bigskip
 Let $X$ and $G$ be as in \cref{th:lenglart}. Assume in addition that both processes start in $0$. Then  Proposition \ref{prop:Lenglart} implies, choosing $F(x) = x^{p}$ for some $p\in(0,1)$ and optimizing over $c$, that
\begin{equation}\label{eq:PropPratelliUgl}
\E[X_\tau^p] \leq (1-p)^{-(1-p)} p^{-p}\E[G_\tau^p].
\end{equation}

Hence, Proposition \ref{prop:Lenglart} gives $c_{BDG,1} = 2$. We show that the constant of inequality \eqref{eq:PropPratelliUgl} can be improved  to $p^{-p}$ (see  \cref{th:Monotonelenglart} and Remark \ref{rmk:rmk4}), which is sharp. In particular, by the argument described above we now achieve $c_{BDG,1} = \sqrt{2} \approx 1,4142$. 
For the right-hand side of the BDG inequality $\E[\sup_{t\geq 0}
|M_t|] \lesssim \E[\langle M, M \rangle_t^{1/2}]$,  the monotone version of
Lenglart's inequality does not yield a sharper constant than the normal Lenglart's
inequality. \\

\hspace{1em} Lenglart's inequality is frequently applied to extrapolate   maximal inequalities to smaller exponents (see e.g. \cite{6}, \cite{MarinelliRoeckner}, \cite{NeervenZhu}, \cite{7} and \cite{2}).
Furthermore, Lenglart's inequality is a useful tool for proving stochastic Gronwall inequalities (see e.g. \cite{4} and \cite{MehriScheutzow}) and more generally studying SDEs (see e.g. \cite{3} and \cite{5}). In many of the application examples listed above, the additional assumption, that $X$ is non-decreasing is satisfied. Hence, instead, \cref{th:Monotonelenglart} could be applied, 
improving the constant considerably for $p$ near $1$.

\section{Main results}

We assume, unless otherwise stated, that all processes are defined on an underlying filtered probability space $(\Omega, \F, \PP, (\F_t)_{t\geq 0})$ which satisfies the usual conditions. \\

The following theorem answers the open question posed by Revuz and Yor \cite[Question IV.1, p.178]{RevuzYor}.

\begin{theorem}[Sharpness of Lenglart's inequality]\label{th:Haupt}
For all $p\in(0,1)$, there exist families of continuous processes $X^{(n)}=(X^{(n)}_t)_{t\geq 0}$ and $G^{(n)}=(G^{(n)}_t)_{t\geq 0}$ (depending on $p$) which satisfy the assumptions of \cref{th:lenglart} such that

\begin{equation} \label{eq:lemmahaupt}
\frac{p^{-p}}{1-p} = \lim_{n\to\infty} \frac{ \E\bigg[\big(\sup_{t\geq 0 }X^{(n)}_{t}\big)^p\bigg]}{ \E\bigg[\big(\sup_{t\geq 0} G^{(n)}_t\big)^p\bigg]}.
\end{equation}

In particular, the constant $c_p = \frac{p^{-p}}{1-p}$ in \cref{th:lenglart} is sharp.
\end{theorem}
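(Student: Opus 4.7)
The plan is to construct, for each fixed $p\in(0,1)$, a family of continuous process pairs $(X^{(n)},G^{(n)})$ satisfying the hypotheses of \cref{th:lenglart} whose ratio $\E[(\sup_{t\geq 0} X^{(n)}_t)^p]/\E[(\sup_{t\geq 0} G^{(n)}_t)^p]$ tends to $c_p=p^{-p}/(1-p)$ as $n\to\infty$.

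First I would revisit the Revuz--Yor proof of \cref{th:lenglart}, which rests on the pointwise estimate
\[
\PP\bigl(\textstyle\sup_t X_t\geq\lambda\bigr)\ \leq\ \frac{\E\bigl[\sup_t G_t\wedge c\lambda\bigr]}{\lambda}+\PP\bigl(\textstyle\sup_t G_t\geq c\lambda\bigr),\qquad c>0,
\]
integrated against $p\lambda^{p-1}\,\dd\lambda$ to yield $\E[(\sup G)^p]\bigl(c^{1-p}/(1-p)+c^{-p}\bigr)$, whose infimum in $c$ is attained at $c=p$ and equals $c_p$. For sharpness the pointwise estimate at $c=p$ must be asymptotically saturated. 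Tracking the cases of near-equality in the various inequalities involved points to an extremal profile in which $G^*:=\sup_t G_t$ carries a heavy (approximately $C/\mu$) tail on a large range and, correspondingly, $X^*:=\sup_t X_t$ has a nearly Pareto tail $\PP(X^*\geq\lambda)\sim C'/\lambda$ on the same range; along such a profile the two upper bounds in the Revuz--Yor argument are asymptotically tight and both the $c$-optimisation and the truncation $F\leq 1$ lose negligible mass.

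Continuous processes whose running maxima realise such Pareto tails are most cleanly produced by positive martingales absorbed at zero. The prototype is $M^{(\epsilon)}_t:=\epsilon+B_{t\wedge\tau_\epsilon}$ with $\tau_\epsilon=\inf\{t:B_t=-\epsilon\}$ for a standard Brownian motion $B$, for which the gambler's--ruin formula gives the \emph{exact} tail $\PP(\sup_t M^{(\epsilon)}_t\geq m)=\min(1,\epsilon/m)$. I would build $X^{(n)}$ as a rescaling and stacking of several such martingales triggered by a preliminary Brownian first-passage time used to adjust the success probability, and take $G^{(n)}$ to be a continuous, non-decreasing, adapted process (hence predictable) dominating $X^{(n)}$ in the average sense required by \cref{th:lenglart}; a natural candidate is a suitable multiple of the quadratic variation of the underlying martingale, combined with a deterministic rise synchronised with the triggering stopping times. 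Calibrating the scales $\epsilon_n$ and triggering probabilities, the tails of $\sup X^{(n)}$ and $\sup G^{(n)}$ approximate the extremal profiles above up to a truncation level growing with $n$, and direct integration of these tails yields the limit $c_p$.

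The main obstacle is the calibration itself. Because the extremal target has $\E[\sup X^*]=\infty$ in the limit, the family must involve truncations whose growth in $n$ is chosen so that (i) the Lenglart hypothesis $\E[X^{(n)}_\rho\mid\F_0]\leq\E[G^{(n)}_\rho\mid\F_0]$ is preserved at \emph{every} bounded stopping time $\rho$ (not only at conveniently chosen ones), (ii) $\E[(\sup G^{(n)})^p]$ remains of the correct order, and (iii) the two $p$-moments converge at the right ratio. Checking the Lenglart condition uniformly over all bounded stopping times against a heavy-tailed target $X^*$, in a fully continuous construction, is the delicate step in the argument.
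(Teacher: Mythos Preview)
Your instinct about the Brownian motion absorbed at zero is exactly what the paper uses for the factor $1/(1-p)$: if $Y_x=\sup_{t\geq 0}(x+B_{t\wedge\sigma_x})$ with $\sigma_x=\inf\{t:B_t=-x\}$, then $\PP(Y_x\geq a)=x/a$ and hence $\E[Y_x^p]=x^p/(1-p)$. Where your plan remains incomplete is the other factor $p^{-p}$ and, inseparably, the verification of the Lenglart domination at \emph{every} bounded stopping time. Your candidate for $G^{(n)}$ (``a multiple of the quadratic variation combined with a deterministic rise'') is not pinned down, and the obstacle you flag at the end --- calibrating so that $\E[X^{(n)}_\rho]\leq\E[G^{(n)}_\rho]$ holds uniformly in $\rho$ --- is precisely the piece you have not resolved.

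The paper sidesteps this obstacle by a two-phase construction that makes the domination automatic rather than something to be checked. In phase one it takes an exponential random variable $Z$ with mean $1$, sets $A(t)=e^{t/p}$, and defines the single-jump process $\tilde X_t=A(Z)\one_{[Z,\infty)}(t)$ together with its \emph{compensator} $\tilde G_t=\int_0^{t\wedge Z}A(s)\,\dd s$; because $\tilde G$ is the compensator one has $\E[\tilde X_\tau]=\E[\tilde G_\tau]$ for every bounded $\tau$, with no calibration needed. A direct computation gives $\E[\tilde X_n^p]=n$ and $\E[\tilde G_n^p]\leq p^p(n+1)$, producing the factor $p^{-p}$. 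In phase two one freezes $G^{(n)}:=\tilde G_{\cdot\wedge n}$ and, after a short continuous ramp bringing $\tilde X_n$ on line, grafts an independent Brownian motion stopped at zero onto $X^{(n)}$; since this added piece is a martingale it contributes nothing to $\E[X^{(n)}_\rho]$, so the Lenglart hypothesis is preserved, while the running supremum acquires exactly your $1/(1-p)$ factor via $\E[Y_{\tilde X_n}^p\mid\F_{n+1}]=\tilde X_n^p/(1-p)$. The point is that decoupling the two factors --- compensator for $p^{-p}$, martingale tail for $1/(1-p)$ --- eliminates the uniform-in-$\rho$ verification that your quadratic-variation proposal would still owe.
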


As explained in the introduction, the application to maximal inequalities motivates us to consider the following monotone version of Lenglart's inequality. We assume in addition that $X$ is non-decreasing and obtain a considerably improved constant for $p$ near $1$. 

\begin{theorem}[Sharp monotone Lenglart's inequality] \label{th:Monotonelenglart}
Let $X$  and $G$ be non-decreasing non-negative adapted right-continuous processes, and let $G$ be in addition predictable such that 
$\E[X_\tau\mid\F_0] \leq \E[G_\tau \mid\F_0] \leq \infty$ for any bounded stopping time $\tau$. Then for all $p\in(0,1)$,
\begin{equation}\label{eq:Monotonelenglart}
\E\bigg[\bigg(\sup_{t\geq 0} X_t\bigg)^p \,\bigg| \, \F_0 \bigg]
\leq p^{-p} \, \E\bigg[\bigg(\sup_{t\geq 0} G_t\bigg)^p \, \bigg|\, \F_0 \bigg].
\end{equation}

Furthermore, for all $p\in(0,1)$ there exist continuous processes $\tilde{X}=(\tilde{X}_t)_{t\geq 0}$ and $ \tilde{G}=(\tilde{G}_t)_{t\geq 0}$, satisfying the assumptions above such that
\begin{equation*}
p^{-p} = \lim_{n\to\infty} \frac{ \E\bigg[\big(\sup_{t\geq 0 } \tilde{X}_{t\wedge n}\big)^p\bigg]}{ \E\bigg[\big(\sup_{t\geq 0 } \tilde{G}_{t\wedge n}\big)^p\bigg]}.
\end{equation*}

In particular, the constant $p^{-p}$ is sharp.
\end{theorem}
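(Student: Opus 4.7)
For the inequality \eqref{eq:Monotonelenglart}, I would adapt the optimized layer-cake proof of \cref{th:lenglart} (Revuz--Yor for continuous, Ren--Shen for c\`adl\`ag) and exploit the monotonicity of $X$ to eliminate the factor $(1-p)^{-1}$. Because $X$ is non-decreasing, $\sup_{t\ge 0} X_t = X_\infty = \lim_{t\to\infty} X_t$, and likewise for $G$. For each $a>0$ and a free scale parameter $\beta>0$, set $b=\beta a$ and introduce the predictable stopping time $\tau_b\define\inf\{t : G_t\ge b\}$. Applying the domination hypothesis to the bounded stopping time $T\wedge \tau_b$ and letting $T\to\infty$ (using predictability of $G$ so that $G_{\tau_b}\le b$) yields a good-$\lambda$ estimate controlling $a\,\PP(X_\infty \ge a\mid\F_0)$ in terms of $\E[G_\infty\wedge b\mid\F_0]$ and $\PP(G_\infty\ge b\mid\F_0)$. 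Integrating $p\int_0^\infty a^{p-1}\,\dd a$ via Fubini produces an expression in $\beta$; the critical point is at $\beta=p$, and the monotonicity of $X$ must be used at the correct place to obtain $p^{-p}$ rather than $p^{-p}/(1-p)$. Concretely, for non-monotone $X$ one stops at $\sigma_a\define\inf\{t:X_t>a\}$ (and loses control after $\sigma_a$), whereas for monotone $X$ one has the identity $\{X_\infty\ge a\}=\{\sigma_a<\infty\}$ together with $X_t\ge a$ for every $t\ge \sigma_a$; this stronger information kills the tail contribution $\int_{\beta a}^\infty \lambda^{p-2}\,\dd\lambda$ that produces the spurious $(1-p)^{-1}$.

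For the sharpness, I would construct continuous non-decreasing processes $(\tilde X,\tilde G)$ satisfying the hypotheses such that $\E[\tilde X_n^p]/\E[\tilde G_n^p]\to p^{-p}$ as $n\to\infty$ (noting that for monotone processes, $\sup_{t\ge 0}\tilde X_{t\wedge n}=\tilde X_n$). A natural candidate is: pick a stopping time $\tau^*$ with a continuous distribution tuned to $p$; let $\tilde G$ be the continuous predictable compensator of an indicator-type jump process associated with $\tau^*$ (or simply a linear ramp over $[\tau^*-\epsilon,\tau^*]$, so that $\tilde G$ is continuous and predictable); and let $\tilde X$ be a continuous non-decreasing adapted process whose terminal value is (approximately) $\tilde G_\infty/p$, but built as the limit of a martingale-plus-compensator decomposition so that the domination $\E[\tilde X_\sigma\mid\F_0]\le \E[\tilde G_\sigma\mid\F_0]$ holds with near-equality for every bounded stopping time $\sigma$. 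Sending the smoothing parameter to $0$ and $n\to\infty$, and using the homogeneity $\E[(\tilde G_\infty/p)^p]=p^{-p}\,\E[\tilde G_\infty^p]$, then gives the ratio $p^{-p}$.

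The main obstacle is the inequality: the standard Revuz--Yor optimization outputs $p^{-p}/(1-p)$ without ever invoking the monotonicity of $X$, so the entire substance of the improvement lies in finding where monotonicity can be inserted into the argument to save the factor $(1-p)$, most plausibly by replacing the Markov-type step by a sharper predictable stopping-time bound enabled by $\{X_\infty>a\}=\{\sigma_a<\infty\}$. For the sharpness, the delicate point is that the domination must hold at every bounded stopping time, not only at a single terminal time, which severely restricts how much the distribution of $\tilde X_\infty$ can be concentrated relative to that of $\tilde G_\infty$ and thus how closely the ratio can approach the concave-Jensen upper bound $p^{-p}$.
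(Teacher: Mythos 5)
Your plan for inequality \eqref{eq:Monotonelenglart} has the right architecture (a good-$\lambda$ estimate via the predictable entrance time of $G$, then an integration in $a$ with optimal scale $\beta=p$), but the one step that actually produces the improvement is missing, and the mechanism you propose for it does not work. If you integrate tail probabilities, $\E[X_\infty^p]=p\int_0^\infty a^{p-1}\PP(X_\infty\ge a)\,\dd a$, against the good-$\lambda$ bound $\PP(X_\infty\ge a)\le \tfrac1a\E[G_\infty\wedge \beta a]+\PP(G_\infty\ge\beta a)$, you inevitably get $\big(\tfrac{\beta^{1-p}}{1-p}+\beta^{-p}\big)\E[G_\infty^p]$, whose minimum over $\beta$ is $\tfrac{p^{-p}}{1-p}$; this pointwise-in-$a$ bound is essentially unimprovable, so no rearrangement of where you insert $\sigma_a$ rescues the factor $(1-p)$. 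Moreover, the facts you invoke --- $\{X_\infty\ge a\}=\{\sigma_a<\infty\}$ and $X_t\ge a$ for $t\ge\sigma_a$ --- hold just as well for the running supremum of a general $X$, so they cannot be the source of the gain. What the paper actually does is upgrade the \emph{left-hand side} of the good-$\lambda$ inequality from $t\,\PP(X_\infty\ge t\mid\F_0)$ to the larger quantity $\E[X_\infty\wedge t\mid\F_0]$ (this is where monotonicity of $X$ enters: $\E[X_\infty\wedge t-X_{\tau-}\wedge t\mid\F_0]\le t\,\PP(\tau<\infty\mid\F_0)$ with $\tau=\inf\{s:G_s\ge\lambda t\}$ announced by a sequence $\tau^{(n)}$, and $\E[X_{\tau-}\mid\F_0]=\lim_n\E[X_{\tau^{(n)}}\mid\F_0]\le\E[G_\infty\wedge\lambda t\mid\F_0]$), and then integrate using the representation $\E[Z^p\mid\F_0]=p(1-p)\int_0^\infty \E[Z\wedge u\mid\F_0]\,u^{p-2}\,\dd u$ rather than the layer-cake formula; the prefactor $p(1-p)$ supplies exactly the factor $(1-p)$ you need, yielding $\lambda^{-p}(\lambda+1-p)$ and hence $p^{-p}$ at $\lambda=p$. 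Without this switch from tails to truncated means, your scheme stalls at $p^{-p}/(1-p)$.

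The sharpness part is likewise only an outline: you correctly identify that the extremal mechanism is $\tilde X_\infty\approx\tilde G_\infty/p$ with domination holding with near-equality at all stopping times, but you do not produce a concrete pair. The paper's example is explicit and worth knowing: take $Z$ exponential with mean $1$, $A(t)=\exp(t/p)$, $\tilde X_t=A(Z)\one_{[Z,\infty)}(t)$ and $\tilde G_t=\int_0^{t\wedge Z}A(s)\,\dd s$, so that $\tilde G$ is the compensator of $\tilde X$ (giving equality $\E[\tilde X_\tau]=\E[\tilde G_\tau]$), $A(Z)=\tfrac1p\int_0^Z A(s)\,\dd s+1$ realises your ratio $1/p$ up to an additive constant, and a direct computation gives $\E[\tilde X^p_{t}]=t$ while $\E[\tilde G^p_{t}]\le p^p(t+1)$, whence the ratio tends to $p^{-p}$. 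Your sketch would need such an explicit choice plus the verification of the domination at every bounded stopping time to count as a proof.
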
 

\begin{remark}
Inequality \eqref{eq:Monotonelenglart} is a sharpened special case of Proposition \ref{prop:Lenglart}, its proof is a modification of the proof of \cite[Proposition 1.2]{Pratelli}. The theorem generalizes a result by Garsia \cite[Theorem III.4.4, page 113]{Garsia}. In \cite[Theorem 2]{Wang}, Wang proved that \cite[Theorem III.4.4, page 113]{Garsia} is sharp. Hence, by translating his result from discrete to continuous time proves sharpness of $p^{-p}$.
\end{remark}

\begin{remark}\label{rmk:rmk4}
\cref{th:Monotonelenglart} can be also applied when $X$ is not non-decreasing. In that case, the theorem implies for any stopping time $\tau$ the inequality $\E[X_\tau^p] \leq p^{-p} 
\, \E[G_\tau^p]$. This can by seen by defining $\hat{X}_t := X_\tau \one_{[\tau,\infty)}(t)$ for all $t\geq 0 $ and noting that $(\hat{X}_t)_{t\geq 0}$  and $(G_{t\wedge \tau})_{t \geq 0}$  satisfy the assumptions of \cref{th:Monotonelenglart}.  
\end{remark}

\begin{remark}
In \cref{th:Monotonelenglart}, the assumption that $G$ is right-continuous and predictable can be replaced by the assumption that $G$ is left-continuous and adapted.
\end{remark}

\begin{remark}
A key part of the proof of Lenglart's inequality is the inequality
\begin{equation*}
\PP\bigg(\sup\limits_{t\geq 0} X_t > c\,\bigg|\,  F_0\bigg)\leq \frac{1}{c} \E\bigg[\sup\limits_{t\geq 0 } G_t \wedge d \,\bigg|\, \F_0\bigg] + \PP\bigg( \sup\limits_{t\geq 0} G_t \geq d \,\bigg|\, \F_0 \bigg)
\end{equation*}
for all $c,d>0$. If $X$ is non-decreasing, this can be improved to
\begin{equation*}
\frac{1}{c} \E\bigg[\sup\limits_{t\geq 0 } X_t \wedge c \,\bigg|\, \F_0\bigg] \leq \frac{1}{c} \E\bigg[\sup\limits_{t\geq 0 } G_t \wedge d \,\bigg|\, \F_0\bigg] + \PP\bigg( \sup\limits_{t\geq 0} G_t \geq d \,\bigg|\, \F_0 \bigg),
\end{equation*}
which is used to prove the monotone version of Lenglart's inequality.
\end{remark}

\begin{remark} If $G$ is not predictable and no further assumptions are made, then there exists no finite constant in inequality \eqref{eq:Monotonelenglart}. An example which demonstrates this can be found in \cite[Remarque after Corollaire II]{Lenglart}. 
\end{remark}

\cref{th:lenglart}, \cref{th:Haupt},
and \cref{th:Monotonelenglart} also hold in discrete time. Here, sharpness of $p^{-p}$ follows immediately from \cite[Theorem 2]{Wang}.

\begin{corollary}[Discrete Lenglart's inequality] \label{cor:discrete}
Let $(X_n)_{n\in\N_0}$  and $(G_n)_{n\in\N_0}$ be non-negative adapted processes, and let $G$ be in addition non-decreasing and predictable such that 
$\E[X_\tau \mid\F_0] \leq \E[G_\tau \mid\F_0] \leq \infty$ for any bounded stopping time $\tau$. Then for all $p\in(0,1)$,
\begin{equation}\label{eq:A}
\E\bigg[\bigg(\sup_{n\in\N_0} X_n \bigg)^p \,\bigg| \, \F_0 \bigg]
\leq c_p \, \E\bigg[\bigg(\sup_{n\in\N_0} G_n \bigg)^p \, \bigg|\, \F_0 \bigg],
\end{equation}
where $c_p \define \frac{p^{-p}}{1-p}$ and the constant $c_p$ is sharp. \\

If we assume in addition, that $(X_n)_{n\in\N_0}$ is non-decreasing, then we have 
\begin{equation}\label{eq:B}
\E\bigg[\bigg(\sup_{n\in\N_0} X_n \bigg)^p \,\bigg| \, \F_0 \bigg]
\leq p^{-p} \, \E\bigg[\bigg(\sup_{n\in\N_0} G_n \bigg)^p \, \bigg|\, \F_0 \bigg]
\end{equation}
and the constant $p^{-p}$ is sharp.
\end{corollary}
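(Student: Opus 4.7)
The plan is to derive both discrete-time inequalities by embedding the processes into continuous time and invoking \cref{th:lenglart} and \cref{th:Monotonelenglart}, and then to treat the two sharpness claims separately. For the embedding I set $\tilde X_t := X_{\lfloor t\rfloor}$, $\tilde G_t := G_{\lfloor t\rfloor}$ and $\tilde{\F}_t := \F_{\lfloor t\rfloor}$. Then $\tilde X,\tilde G$ are non-negative right-continuous adapted step processes (with $\tilde{\F}$ right-continuous at non-integer times trivially, and also at integer times since $\lfloor t\rfloor$ is right-continuous), $\tilde G$ inherits monotonicity of $G$, and the discrete predictability $G_n\in\F_{n-1}$ upgrades to continuous-time predictability of $\tilde G$: the only jumps of $\tilde G$ occur at the deterministic (hence predictable) stopping times $n\in\N$, with sizes $G_n-G_{n-1}$ measurable with respect to $\tilde{\F}_{n-}=\F_{n-1}$. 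For any bounded $\tilde{\F}$-stopping time $\tau$, the integer $\lfloor\tau\rfloor$ is a discrete $(\F_n)$-stopping time with $\tilde X_\tau=X_{\lfloor\tau\rfloor}$ and $\tilde G_\tau=G_{\lfloor\tau\rfloor}$, so the domination hypothesis carries over. Applying \cref{th:lenglart} and \cref{th:Monotonelenglart} to $\tilde X,\tilde G$, together with $\sup_{t\geq 0}\tilde X_t=\sup_{n\in\N_0}X_n$ (and analogously for $G$), yields \eqref{eq:A} and \eqref{eq:B}.

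Sharpness of $p^{-p}$ in \eqref{eq:B} is, as already noted in the remark above the corollary, an immediate consequence of \cite[Theorem 2]{Wang}: his discrete-time construction supplies non-decreasing adapted $X$ and non-decreasing predictable $G$ satisfying the domination hypothesis, for which the ratio on the left-hand side of \eqref{eq:B} tends to $p^{-p}$, so the same sequences serve as extremizers here.

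For sharpness of $c_p$ in \eqref{eq:A} the cleanest route is to run the proof of \cref{th:Haupt} directly in discrete time, since the construction underlying it admits a natural discrete analog with the same quantitative output. An alternative is to discretize the continuous extremizers $X^{(n)},G^{(n)}$ of \cref{th:Haupt} by sampling on a refining grid $m^{-1}\N_0$: continuity of $X^{(n)},G^{(n)}$ together with dominated convergence makes the discrete suprema approximate the continuous ones as $m\to\infty$, and a diagonal extraction in $(n,m)$ then produces a discrete sequence along which the ratio in \eqref{eq:A} converges to $c_p$.

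The principal technical obstacle I anticipate is the reconciliation of discrete predictability of $G$ with continuous-time sampling in the alternative sharpness argument: a continuous-time predictable $G^{(n)}$ is only $\F_{(k/m)-}$-measurable, not $\F_{(k-1)/m}$-measurable, at time $k/m$, so one must either sample $G^{(n)}$ at a slightly shifted time and exploit its monotonicity and continuity to control the induced error in the domination hypothesis, or refine the discrete filtration appropriately. The embedding of Step~1 is by contrast routine, once the jump-time characterization of predictable processes is invoked to handle $\tilde G$.
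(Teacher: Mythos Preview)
Your proposal is correct and follows exactly the paper's route: embed into continuous time via $X_t:=X_{\lfloor t\rfloor}$, $G_t:=G_{\lfloor t\rfloor}$, $\F_t:=\F_{\lfloor t\rfloor}$ to obtain \eqref{eq:A} and \eqref{eq:B}, cite \cite[Theorem~2]{Wang} for sharpness of $p^{-p}$, and discretize the continuous extremizers $X^{(n)},G^{(n)}$ of \cref{th:Haupt} on a dyadic grid for sharpness of $c_p$. The paper resolves the predictability obstacle you anticipate precisely along the lines of your first suggestion---it samples $G^{(n)}$ one grid step back and adds the deterministic compensator $\int_{(k-1)2^{-N}\wedge n}^{k2^{-N}\wedge n}A(s)\,\dd s$, so that $G^{(n,N)}_k\geq G^{(n)}_{k2^{-N}}$ and the domination hypothesis is preserved exactly rather than up to an error term.
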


\section{Proof of \cref{th:Haupt}}
\begin{proof}[Proof of \cref{th:Haupt}] Choose an arbitrary $p\in(0,1)$ for the remainder of this proof. First, we define non-decreasing processes $\tilde{X}=(\tilde{X}_t)_{t\geq 0}$ and  $\tilde{G}=(\tilde{G}_t)_{t\geq 0}$ which satisfy the assumptions of \cref{th:lenglart}, such that
\begin{equation*}
p^{-p} = \lim_{n\to\infty}\frac{\E\big[\big(\sup_{t\geq 0} \tilde{X}_{t\wedge n}\big)^p\,\big]}{\E\big[\big(\sup_{t\geq 0} \tilde{G}_{t\wedge n}\big)^p \,
\big]}.
\end{equation*}
To obtain the extra factor $(1-p)^{-1}$, we modify $\tilde{X}$ and $\tilde{G}$ using an independent Brownian motion: This gives us the families $\{(X^{(n)}_t)_{t\geq 0},n\in\N\}$ and $\{(G^{(n)}_t)_{t\geq 0},n\in\N \}$. \\

Note that if we have non-negative random variables $X_{RV}:=1$ and $G_{RV}$ with $\E[X_{RV}] = \E[G_{RV}]$, then we obtain
 $\E[X_{RV}^p] >> \E[G_{RV}^p]$  for example by choosing $G_{RV}$ to be very large on a set with small probability and everywhere else $0$. Keeping this in mind, we construct $\tilde{X}$ and  $\tilde{G}$ as follows: Let $Z$ be an exponentially distributed random variable on a complete probability space $(\Omega, \F, \PP)$ with $\E[Z]=1$. Set 
\begin{equation*}
A:[0,\infty)\to[0,\infty), \quad t\mapsto \exp(t/p).
\end{equation*}

Define for all $t\geq 0 $
\begin{equation*}
\tilde{X}_t \define A(Z) \one_{[Z,\infty)}(t), \qquad \tilde{G}_t \define  \int_0^{t\wedge Z} A(s) \dd s.
\end{equation*}
 
Choose $\tilde{\F}_t := \sigma(\{Z \leq r\} \mid  0 \leq r \leq t)$ for all $t\geq 0$. Observe that $\tilde{X}$ and $\tilde{G}$ are non-decreasing non-negative adapted right-continuous processes, and $\tilde{G}$ is in addition continuous, hence predictable.  Furthermore, due to $Z$ being exponentially distributed, $\tilde{G}$ is the compensator of $\tilde{X}$, implying $\E[\tilde{X}_\tau] = \E[\tilde{G}_\tau]$ for all bounded $\tau$.

\bigskip

Now we use the processes  $\tilde{X}$ and $\tilde{G}$ to construct the families $\{(X^{(n)}_t)_{t\geq 0},n\in\N\}$ and $\{(G^{(n)}_t)_{t\geq 0},n\in\N \}$: Assume w.l.o.g. that there exists a Brownian motion $B$ on $(\Omega, \F, \PP)$. Let
$(\F_t)_{t\geq 0}$ be the smallest filtration satisfying the usual conditions which contains $(\tilde{\F}_t)_{t\geq 0}$ and w.r.t. which $B$ is a Brownian motion. Denote by $g_{n,n+1}:[0,\infty) \to [0,1]$ a continuous non-decreasing function such that 
\begin{equation}
g_{n,n+1}(t) = 0 \quad \forall t\leq n,\quad \text{and} \quad  g_{n,n+1}(t) = 1 \quad \forall t \geq n+1.
\end{equation}
Define:
\begin{equation*}
\begin{aligned}
\tau^{(n)} &\define \inf\{t\geq n+1 \mid \tilde{X}_{n} + (B_t - B_{n+1})\one_{\{t\geq n+1\}}  = 0 \}, \\
X^{(n)}_t & \define g_{n,n+1}(t) \tilde{X}_{n} + (B_{t\wedge \tau^{(n)}} - B_{t\wedge (n+1)}) \\
G^{(n)}_t & \define \tilde{G}_{t\wedge n}
\end{aligned}
\end{equation*}

The stopping time $\tau^{(n)}$ ensures that $X^{(n)}_t$ is non-negative. By construction, we have for every bounded $(\F_t)_{t\geq 0}$ stopping time $\tau$
\begin{equation*}
\E[X^{(n)}_\tau]  \leq \E[\tilde{X}_{\tau\wedge n} +  B_{\tau\wedge \tau^{(n)}} - B_{\tau\wedge (n+1)}] = \E[\tilde{G}_{\tau \wedge n}] = \E[G^{(n)}_\tau].
\end{equation*}

Hence, $(X^{(n)}_t)_{t\geq 0}$ and  $(G^{(n)}_t)_{t\geq 0}$  are continuous processes that satisfy the assumptions of \cref{th:lenglart}.   \\

It remains to calculate 
$\E\big[\big(\sup_{t\geq 0 }X^{(n)}_{t}\big)^p\big]$ and $\E\big[\big(\sup_{t\geq 0 }G^{(n)}_{t}\big)^p\big]$,
to show that equation \eqref{eq:lemmahaupt} is satisfied.  We have
\begin{equation}\label{eq:XpGp}
\begin{aligned}
\E[\tilde{X}^p_t] & = \int_0^\infty A(x)^p \one_{\{t\geq x\}}\exp(-x) \dd x = t, \\
\E[\tilde{G}^p_t] &  = \int_0^\infty   \bigg(\int_0^{t\wedge x} A(s) \dd s\bigg)^p \exp(-x) \dd x  \leq  p^{p}(t+1),
\end{aligned}
\end{equation}
which implies in particular that 
$\E\big[\big(\sup_{t\geq 0} G^{(n)}_t \big)^p\big] \leq p^p(n+1)$. \\

We calculate $\E\big[\big(\sup_{t\geq 0 }X^{(n)}_{t}\big)^p\big]$ using the independence of $Z$ and $B$. To this end, let $\tilde{B}$ be some Brownian motion and  consider for all $0\leq x < a^{1/p}$  the stopping times
\begin{equation*}
\sigma_{x}  : =\inf\{t \geq 0 \mid  \tilde{B}_t + x = 0\}, \quad 
\sigma_{x,a} : =\inf\{t \geq 0 \mid \tilde{B}_t + x = a^{1/p}\}.
\end{equation*}
Define the family of random variables $Y_x := \sup_{t\geq 0}  \tilde{B}_{t\wedge \sigma_x}  + x$, $x\geq 0$. Then $\E[\tilde{B}_{\sigma_x\wedge \sigma_{x,a}}]=0$ implies $\PP[ Y_x \geq a^{1/p}] = \PP[\sigma_{x,a} < \sigma_{x}] = xa^{-1/p}$, and hence
\begin{equation}\label{eq:Yxp}
\E[Y_x^p]  = x^p + \int_{x^p}^\infty \PP[ Y_x \geq a^{1/p}] \dd a = x^p + x^p\frac{p}{1-p} =\frac{x^p}{1-p}.
\end{equation}

Hence, we have by \eqref{eq:XpGp}, \eqref{eq:Yxp} and independence of $(B_t-B_{n+1})_{t\geq n+1}$ and $\F_{n+1}$:
\begin{equation*}
\begin{aligned}
\E\big[\big(\sup_{t\geq 0}X^{(n)}_t \big)^p\big] &  = 
\E\big[ \E\big[\big(\sup_{t\geq 0}X^{(n)}_t \big)^p\mid \F_{n+1}\big]\big] \\
& = \E\bigg[\frac{1}{1-p}\big(\tilde{X}_n\big)^p\bigg]\\
& = \frac{n}{1-p}.
\end{aligned}
\end{equation*}

Therefore, we have:
\begin{equation*}
c_p \geq \frac{\E[(\sup_{t\geq 0} X^{(n)}_t)^p]}{\E[(\sup_{t\geq 0} G^{(n)}_t)^p]} \geq  \frac{n}{1-p}\frac{p^{-p}}{n+1},
\end{equation*}
which implies \eqref{eq:lemmahaupt}.
\end{proof}

\section{Proof of \cref{th:Monotonelenglart}}

\begin{remark}
The following proof of inequality \eqref{eq:Monotonelenglart} is a modification of the proof of \cite[Proposition 1.2]{Pratelli}.
Sharpness of the constant can be proven using \cite[Theorem 2]{Wang}.
\end{remark}

\begin{proof}[Proof of \cref{th:Monotonelenglart}]
We first show that $p^{-p}$ is the optimal constant. Sharpness of $p^{-p}$ can be proven
by translating \cite[Theorem 2]{Wang} into continuous time. Alternatively, one can use the processes $\tilde{X}$ and $\tilde{G}$ and  the filtration $(\F_t)_{t\geq 0}$ from the proof of \cref{th:Haupt}: Equation \eqref{eq:XpGp} implies, that
\begin{equation*}
p^{-p} = \lim_{n\to\infty}\frac{\E\bigg[\bigg(\sup_{t\geq 0} \tilde{X}_{t\wedge n}\bigg)^p\,\bigg]}{\E\bigg[\bigg(\sup_{t\geq 0} \tilde{G}_{t\wedge n}\bigg)^p \,\bigg]},
\end{equation*}
and therefore that $p^{-p}$ is sharp. \\

Now we prove that inequality \eqref{eq:Monotonelenglart} holds true. We may assume w.l.o.g. that $(G_t)_{t\geq 0}$ is bounded (because it is predictable). This implies $\E[\sup_{t\geq 0} X_t] <\infty$. To shorten notation, we define
\begin{equation}
X_\infty \define \sup_{t\geq 0} X_t, \qquad G_\infty \define \sup_{t\geq 0} G_t.
\end{equation}
We use the following formulas for positive random variables $Z$
 (equation \eqref{eq:formula2} is a direct consequence of \eqref{eq:formula1}, alternatively see  also \cite[Theorem 20.1, p. 38-39]{Burkholder}):
\begin{align}
\E[Z^p \mid \F_0 ] & = \int_0^\infty \PP[Z \geq u^{1/p}\mid \F_0 ] \, \dd u,  \label{eq:formula1}\\
\E[Z^p \mid \F_0 ] & = p(1-p) \int_0^\infty \E[Z\wedge u \mid \F_0 ] \, u^{p-2} \dd u. \label{eq:formula2}
\end{align}

We will apply \eqref{eq:formula2} to $X_\infty$.  To estimate $\E[X_\infty \wedge t \mid F_0 ]$, we fix some $t,\lambda >0$ and define:
\begin{equation*}
\tau := \inf\{s \geq 0 \mid G_s \geq \lambda t\}.
\end{equation*}

Because $(G_t)_{t\geq 0}$ is predictable, there exists a sequence of stopping times $(\tau^{(n)})_{n\in\N}$ that announces $\tau$.
Therefore, we have on the set $\{G_0 < \lambda t\}$ :
\begin{equation}\label{eq:A1}
\begin{aligned}
\E[X_{\tau-}  \mid \F_0 ] & = \lim_{n\to\infty} \E[X_{\tau^{(n)}}  \mid \F_0 ] \leq \lim_{n\to\infty} \E[G_{\tau^{(n)}}  \mid \F_0 ] \\[0.5em]
& \leq \E[G_\infty \wedge \lambda t  \mid \F_0] = 
 \lambda \E[(G_\infty \lambda^{-1}) \wedge t  \mid \F_0 ].
\end{aligned}
\end{equation}

On  $\{\tau =\infty\}$ we have $\lim_{n\to\infty}X_{\tau^{(n)}}\wedge t =
X_\infty\wedge t$, which implies on the set $\{G_0 < \lambda t\}$ :
\begin{equation}\label{eq:B1}
\E[X_\infty \wedge t - X_{\tau_-} \wedge t  \mid \F_0 ] \leq t  \E[ \one_{\{\tau <
+\infty\}}\mid \F_0 ].
\end{equation}

Combining inequalities \eqref{eq:A1} and \eqref{eq:B1} gives:
\begin{equation}\label{eq:estimate}
\begin{aligned}
\E[X_\infty \wedge t \mid \F_0] 
&\leq  t \one_{\{G_0 \geq \lambda t\}}  + \big(\E[X_{\tau_-} \mid \F_0] + 
\E[X_\infty\wedge t -  X_{\tau_-}\wedge t \mid \F_0]\big)\one_{\{G_0 < \lambda
t\}} \\[0.5em]
  &\leq \lambda \E[(G_\infty \lambda^{-1}) \wedge t \mid \F_0] + t \PP[G_\infty \geq
\lambda t \mid \F_0].
\end{aligned}
\end{equation}

Applying  \eqref{eq:formula2} to  $X_\infty$ and inserting \eqref{eq:estimate} gives:

\begin{equation*}
\begin{aligned}
\E[X_\infty^p  \mid \F_0] &\leq \lambda p(1-p) \int_0^\infty \E[(G_\infty\lambda^{-1}) \wedge u  \mid \F_0] u^{p-2} \dd u   \\
& \quad + p(1-p) \int_0^\infty \PP[G_\infty \geq \lambda u  \mid \F_0] u^{p-1} \dd u.
\end{aligned}
\end{equation*}

Applying  \eqref{eq:formula1} and \eqref{eq:formula2} to $G_\infty$ in the previous inequality implies:
\begin{equation*}
\begin{aligned}
\E[X_\infty^p \mid \F_0] & \leq \lambda^{1-p} \E[G_\infty^p  \mid \F_0] + (1-p) \int_0^\infty \PP[G_\infty \geq \lambda y^{1/p}  \mid \F_0]  \dd y \\
& \leq \lambda^{-p}\big(\lambda + 1-p\big) \E[G_\infty^p  \mid \F_0 ].
\end{aligned}
\end{equation*}

Choosing $\lambda = p$ implies the assertion of the theorem.
\end{proof}

\section{Proof of Corollary \ref{cor:discrete}}
\begin{proof}[Proof of Corollary \ref{cor:discrete}]
We first prove inequalities  \eqref{eq:A} and \eqref{eq:B}:
We turn the processes $(X_n)_{n\in\N_0}$ and $(G_n)_{n\in\N_0}$ into c\`adl\`ag processes in continuous time
 as follows: Set for all $n\in\N_0, t\in[n,n+1)$:
\begin{equation*}
X_t \define X_n, \qquad G_t \define  G_n, \qquad \F_t := \F_n.
\end{equation*}
As we can approximate $(G_t)_{t\geq 0}$ by left-continuous adapted processes, it is predictable. Now \cref{th:lenglart} and \cref{th:Monotonelenglart}
immediately imply inequalities  \eqref{eq:A} and \eqref{eq:B}. \\

Sharpness of $p^{-p}$ follows from \cite[Theorem 2]{Wang}. We show that $\frac{p^{-p}}{1-p}$ is sharp. 

Let $X^{(n)}$,  $G^{(n)}$, $A$ and $(\F_t)_{t\geq 0}$ be as in proof of \cref{th:Haupt}.  Fix some arbitrary $N\in\N$. Set for all $k, n\in\N$
\begin{equation*}
\begin{aligned}
X^{(n, N)}_0 & \define X^{(n)}_0 
\qquad & X^{(n, N)}_k &\define  X^{(n)}_{k2^{-N}}, \\
G^{(n,N)}_0 &\define G^{(n)}_0 \qquad &
G^{(n,N)}_k &\define G^{(n)}_{(k-1)2^{-N}} + \int_ {(k-1)2^{-N}\wedge n}^{k 2^{-N}\wedge n}A(s)\dd s, \\
\F^{(n,N)}_0 &\define \F_0 &
\F^{(n,N)}_k &\define \F_{k2^{-N}}.
\end{aligned}
\end{equation*}
The processes $(X^{(n,N)}_k)_{k\in \N_0}$ and $(G^{(n,N)}_k)_{k\in \N_0}$ are non-negative and adapted,  $(G^{(n,N)}_k)_{k\in \N_0}$ is in addition non-decreasing and predictable. Since $G^{(n)}_{k2^{-N}}  \leq  G^{(n,N)}_k$, the processes satisfy the Lenglart domination assumption.

Hence, noting that
\begin{equation*}
\begin{aligned}
\lim_{N\to\infty}\E\bigg[ \bigg(\sup_{k\in\N_0} X^{(n,N)}_k\bigg)^p \, \bigg] &= \E\bigg[\bigg(\sup_{t\geq 0 }X^{(n)}_{t}\bigg)^p\, \bigg],\\
\lim_{N\to\infty}\E\bigg[ \bigg(\sup_{k\in\N_0} G^{(n,N)}_k\bigg)^p\, \bigg] & = \E\bigg[\bigg(\sup_{t\geq 0 } G^{(n)}_{t}\bigg)^p\, \bigg],
\end{aligned}
\end{equation*}

implies the assertion of the corollary.

\end{proof}

\bibliographystyle{amsplain} 
\bibliography{lenglartnonr.bib}

\end{document}